\newtheorem{theorem}{Theorem}[section]
\newtheorem{lemma}[theorem]{Lemma}
\theoremstyle{definition}
\newtheorem{corollary}[theorem]{Corollary}
\newtheorem{remark}[theorem]{Remark}
\theoremstyle{remark}
\newcommand{\bs}{\begin{split}}
\newcommand{\es}{\begin{split}}
\newcommand{\be}{\begin{equation}}
\newcommand{\ee}{\end{equation}}
\numberwithin{equation}{section}
\begin{document}

\title{The Hirzebruch $\chi_y$-genus and Poincar\'{e} polynomial revisited}

\author{Ping Li}
\address{Department of Mathematics,
Tongji University, Shanghai 200092, China}

\email{pingli@tongji.edu.cn,\qquad  pinglimath@gmail.com}
\thanks{The author was supported by National
Natural Science Foundation of China (Grant No. 11471247) and the
Fundamental Research Funds for the Central Universities.}

 \subjclass[2010]{32Q55, 53D20, 37J10.}


\keywords{Poincar\'{e} polynomial, Hirzebruch $\chi_y$-genus,
``-1"-phenomenon, residue formula, Hamiltonian torus action}

\begin{abstract}
The Hirzebruch $\chi_y$-genus and Poincar\'{e} polynomial share some
similar features. In this article we investigate two of their
similar features simultaneously. Through this process we shall
derive several new results as well as reprove and improve some known
results.
\end{abstract}

\maketitle

\tableofcontents

\section{Introduction}
Let $X$ be a $k$-dimensional closed orientable manifold and $b_i(X)$
its $i$-th Betti number. The Poincar\'{e} polynomial of $X$, denoted
by $P_y(X)$, is by definition the generating function of its Betti
numbers:
$$P_y(X):=\sum_{i=0}^{k}b_i(X)\cdot y^i.$$
$P_y(X)$ satisfies a basic relation $P_y(X)=y^k\cdot P_{y^{-1}}(X)$
which is nothing but a reformulation of the Poincar\'{e} dualities
$b_i=b_{k-i}$ ($0\leq i\leq k$). When evaluated at $y=-1$,
$P_y(X)\big|_{y=-1}$ gives the most important (combinatorial)
invariant: the Euler characteristic. In principle, to determine the
Poincar\'{e} polynomial of a manifold is equivalent to knowing all
its Betti numbers.

Now we turn to the definition of the Hirzebruch $\chi_y$-genus,
which was first introduced by Hirzebruch in his seminal book
\cite{Hi} for projective manifolds, and can be computed by the
celebrated Hirzebruch-Riemann-Roch formula also established in
\cite{Hi}. Later on the discovery of the Atiyah-Singer index theorem
tells us that it still holds for almost-complex manifolds. To be
more precise, let $(M^{2n},J)$ be a compact almost-complex manifold
with complex dimension $n$ and an almost-complex structure $J$. As
usual we use $\bar{\partial}$ to denote the $d$-bar operator which
acts on the complex vector spaces $\Omega^{p,q}(M)$ ($0\leq p,q\leq
n$) of $(p,q)$-type differential forms on $(M^{2n},J)$ in the sense
of $J$ (\cite[p. 27]{We}). The choice of an almost Hermitian metric
on $(M^{2n},J)$ enables us to define the Hodge star operator $\ast$
and the formal adjoint
$\bar{\partial}^{\ast}=-\ast\bar{\partial}~\ast$ of the
$\bar{\partial}$-operator. Then for each $0\leq p\leq n$, we have
the following Dolbeault-type elliptic operator
\be\label{GDC}\bigoplus_{\textrm{$q$
even}}\Omega^{p,q}(M)\xrightarrow
{\bar{\partial}+\bar{\partial}^{\ast}} \bigoplus_{\textrm{$q$
odd}}\Omega^{p,q}(M),\ee whose index is denoted by $\chi^{p}(M)$ in
the notation of Hirzebruch in \cite{Hi}. The Hirzebruch
$\chi_{y}$-genus, denoted by $\chi_{y}(M)$, is the generating
function of these indices $\chi^p(M)$:
$$\chi_{y}(M):=\sum_{p=0}^{n}\chi^{p}(M)\cdot y^{p}.$$

The general form of the Hirzebruch-Riemann-Roch theorem, which is a
corollary of the Atiyah-Singer index theorem, allows us to compute
$\chi_y(M)$ in terms of the Chern numbers of $M$ as follows
\be\label{HRR}\chi_y(M)=\int_M\prod_{i=1}^n\frac{x_i(1+ye^{-x_i})}{1-e^{-x_i}},\ee
where $x_1,\ldots,x_n$ are formal Chern roots of $(M,J)$, i.e., the
$i$-th elementary symmetric polynomial of $x_1,\ldots,x_n$ is the
$i$-th Chern class of $(M,J)$. Similar to that of the Poincar\'{e}
polynomial, $\chi_y(M)$ also satisfies
$\chi_y(M)=(-y)^n\cdot\chi_{y^{-1}}(M)$ which are equivalent to the
relations $\chi^p=(-1)^n\chi^{n-p}$ and can be derived from
(\ref{HRR}). For three values of $y$, this $\chi_y$-genus is an
important invariant: $\chi_y(M)\big|_{y=-1}$ is the Euler
characteristic of $M$, $\chi_y(M)\big|_{y=0}$ is the Todd genus of
$M$, and $\chi_y(M)\big|_{y=1}$ is the signature of $M$.

 When
$J$ is integrable, i.e., $M$ is an $n$-dimensional compact complex
manifold, which is equivalent to the condition that
$\bar{\partial}^2\equiv0$, the two-step elliptic complex (\ref{GDC})
above has  the following resolution, which is the well-known
Dolbeault complex: \be\label{DC}0\rightarrow\Omega^{p,0}(M)
\xrightarrow{\bar{\partial}}\Omega^{p,1}(M)
\xrightarrow{\bar{\partial}}\cdots\xrightarrow{\bar{\partial}}\Omega^{p,n}(M)\rightarrow
0\ee and hence
 \be\label{Hodgenumber}\chi^{p}(M)=\sum_{q=0}^{n}(-1)
^{q}\text{dim}_{\mathbb{C}}H^{p,q}
_{\bar{\partial}}(M)=:\sum_{q=0}^{n}(-1)^{q}h^{p,q}(M).\ee Here
$h^{p,q}(M)$ are the corresponding Hodge numbers of $M$, which are
the complex dimensions of the corresponding Dolbeault cohomology
groups $H^{p,q} _{\bar{\partial}}(M)$. The famous Serre duality
(\cite[p. 102]{GH}) gives the relation $h^{p,q}=h^{n-p,n-q}$, which
can be used to give an alternative proof of the fact
$\chi^p=(-1)^n\chi^{n-p}$ in the case of $J$ being integrable:
$$\chi^{p}=\sum_{q=0}^{n}(-1)^{q}h^{p,q}=
\sum_{q=0}^{n}(-1)^{q}h^{n-p,n-q}=
(-1)^n\sum_{q=0}^{n}(-1)^{q}h^{n-p,q}=(-1)^n\chi^{n-p}.$$

The Poincar\'{e} polynomial and the Hirzebruch $\chi_y$-genus are
two fundamental mathematical objects and have been studied
intensively from various aspects. However, as far as the author
knows, there is no explicit investigation in the existing literature
towards \emph{direct} connections between Poincar\'{e} polynomial
and the Hirzebruch $\chi_y$-genus. Indeed they do share some
similarities. For instance, as we have mentioned, their coefficients
satisfy similar duality relations: $b_i=b_{k-i}$ and
$\chi^p=(-1)^n\chi^{n-p}$, and when evaluated at $y=-1$, both
$P_y(X)\big|_{y=-1}$ and $\chi_y(M)\big|_{y=-1}$ give the Euler
characteristic.

Over the past several years, the author gradually realized that
these two mathematical objects should share some more interesting
and deeper similarities in various senses. For instance, Thompson
noticed in \cite{Th} that
 the character of the natural $SL(2,\mathbb{C})$-representation on
any hyperK\"{a}hler manifold induced by the holomorphic two form is
essentially its $\chi_y$-genus. Inspired by this interesting
observation and keeping the similarity between the $\chi_y$-genus
and Poincar\'{e} polynomial in mind, the author showed in\cite{Li3}
that there exists an analogous result for any compact K\"{a}hler
manifold: the character of the natural
$SL(2,\mathbb{C})$-representation on any compact K\"{a}hler manifold
induced by the K\"{a}hler form is essentially its Poincar\'{e}
polynomial.

The main purpose of the current article is to strengthen this belief
from two aspects by investigating some properties simultaneously for
manifolds with some extra structures (K\"{a}hler structure,
hyperK\"{a}hler structure, symplectic structure etc.). Through this
process we can derive a number of nontrivial results. Among these
results, some have been known for some time by using somewhat
different methods while some should be new, at least to the author's
best knowledge. The author believes that the relationship between
Poincar\'{e} polynomial and the Hirzebruch $\chi_y$-genus deserves
more attention and there should exist deeper interactions between
them.

The rest of this article is arranged as follows. In Section
\ref{section2} we introduce and investigate the slightly modified
coefficients of the Taylor expansions of Poincar\'{e} polynomial and
the $\chi_y$-genus at $y=-1$. Section \ref{section3} is devoted to
some related applications of these coefficients to K\"{a}hler and
hyperK\"{a}hler manifolds. In the first two subsections of Section
\ref{section4} we recall two residue formulas related to the
$\chi_y$-genus and Poincar\'{e} polynomial. Then in the third
subsection, Section \ref{section4.3}, we provide some geometric and
topological obstructions to the existence of Hamiltonian torus
actions with isolated fixed points on compact symplectic manifolds.

\section{``$-1$"-phenomenon of the $\chi_y$-genus}\label{section2}
The material in this section is inspired by an interesting
phenomenon of the $\chi_y$-genus, which the author calls \emph{the
``$-1$"-phenomenon} and has been observed, implicitly or explicitly,
in several independent articles.

\subsection{``$-1$"-Phenomena}\label{section2.1}
The purpose of this subsection is to recall this ``$-1$"-phenomenon
for the Hirzebruch $\chi_y$-genus.

 As we have mentioned in the
introduction, when evaluated at $y=-1$, $\chi_y(M)\big|_{y=-1}$
gives the Euler characteristic, which is equal to the top Chern
number $c_n$ of $M$. Note that $\chi_y(M)\big|_{y=-1}$ is exactly
the constant term in the Taylor expansion of $\chi_y(M)$ at $y=-1$.
In fact, several independent articles (\cite{NR}, \cite{LW},
\cite{Sa1}), with different backgrounds, have observed that, when
expanding the right-hand side of (\ref{HRR}) at $y=-1$, its
corresponding coefficients can be expressed \emph{explicitly} in
terms of Chern numbers. More precisely, if we denote
$$\int_M\prod_{i=1}^n\frac{x_i(1+ye^{-x_i})}{1-e^{-x_i}}=:\sum_{i=0}^na_i(M)\cdot(y+1)^i,$$
then we have
$$a_0=c_{n},\qquad a_1= -\frac{1}{2}nc_{n},$$
$$ a_2=
\frac{1}{12}[\frac{n(3n-5)}{2}c_{n}+c_{1}c_{n-1}],$$
$$a_3=-\frac{1}{24}[\frac{n(n-2)(n-3)}{2}c_{n}+(n-2)c_{1}c_{n-1}],$$
\be\begin{split}
a_4=\frac{1}{5760}[&n(15n^{3}-150n^{2}+485n-502)c_{n}+
4(15n^{2}-85n+108)c_{1}c_{n-1}\\
&+8(c_{1}^{2}+3c_{2})c_{n-2}-
8(c_{1}^{3}-3c_{1}c_{2}+3c_{3})c_{n-3}], \end{split}\nonumber\ee
$$\cdots.$$

The derivations of $a_0$ and $a_1$ are easy. The calculation of
$a_2$ appears implicitly in \cite[p. 18]{NR} and \cite[Corollary
5.3.12]{Fu} and explicitly in \cite[p. 141-143]{LW}. Narasimhan and
Ramanan used $a_2$ to give a topological restriction on some moduli
spaces of stable vector bundles over Riemann surfaces. The primary
interest of \cite[Ch. 5]{Fu} is to interpret the Futaki invariant on
Fano manifolds as a special case of a family of integral invariants.
But in Corollary 5.3.12 Futaki also implicitly computed the
expression $a_2$. Libgober and Wood used $a_2$ to prove the
uniqueness of the complex structure on K\"{a}hler manifolds of
certain homotopy types \cite[Theorems 1 and 2]{LW}. Inspired by
\cite{NR}, Salamon applied $a_2$ (\cite[Corollary 3.4]{Sa1}) to
obtain a restriction on the Betti numbers of hyperK\"{a}hler
manifolds (\cite[Theorem 4.1]{Sa1}). In \cite{Hi2}, Hirzebruch
applied $a_1$, $a_2$ and $a_3$ to deduce a divisibility result on
the Euler number of almost-complex manifolds with $c_1=0$. The
expressions $a_3$ and $a_4$ are also included in \cite[p. 145]{Sa1}.

\subsection{Technical preliminaries}
The purpose of this subsection is to introduce and investigate some
numerical values $h(p^i)$ and $f(i)$ related to the $\chi_y$-genus
and Poincar\'{e} polynomial, which are slightly modified from the
coefficients of the Taylor expansion of them at $y=-1$.

For any $n$-dimensional compact complex manifold $M$, this
``$-1$"-phenomenon tells us that, via the H-R-R formula (\ref{HRR})
and the relation (\ref{Hodgenumber}), some linear combinations of
the Hodge numbers of $M$ can be expressed in terms of its Chern
numbers:
$$a_0=\sum_{p=0}^n(-1)^p\cdot\chi^p,$$
and for $i\geq 1$,
  \be\label{aihodege}\begin{split}
 a_i&=\frac{(-1)^i}{i!}\sum_{p=i}^n(-1)^{p}\cdot\chi^p\cdot p(p-1)\cdots(p-i+1)\\
&=\frac{(-1)^i}{i!}\sum_{p=0}^n(-1)^{p}\cdot\chi^p\cdot p(p-1)\cdots(p-i+1)\\
&=\frac{(-1)^i}{i!}\sum_{p,q=0}^n(-1)^{p+q}\cdot h^{p,q}\cdot
p(p-1)\cdots(p-i+1)
\end{split}\ee

For our later convenience, we define, for any polynomial $x=x(p,q)$,
\be\label{h(x)}h(x):=\sum_{p,q=0}^{n} (-1)^{p+q}\cdot h^{p,q}\cdot
x.\ee

Using this symbol we know that
$$a_0=h(1)=h(p^0)$$
and
$$a_i=\frac{(-1)^i}{i!}h\big(p(p-1)\cdots(p-i+1)\big)\qquad\text{for}\qquad
i\geq 1.$$

 However, in order to reveal this
``$-1$"-phenomenon more efficiently, we would like to investigate
the slightly modified coefficients
$$h(p^i)=\sum_{p,q=0}^{n}
(-1)^{p+q}\cdot h^{p,q}\cdot p^i$$
 originating from $a_i$.

The following technical lemma tells us that the three sets
$\{a_i\}$, $\{h(p^i)\}$ and $\{h(p^{2i})\}$ contain the same
information.

\begin{lemma}\label{technicallemma1}
~
\begin{enumerate}
\item
 Any element in the set $\{a_0,a_1,\ldots,a_n\}$ can be
expressed
 in terms of the elements in the set $\{h(p^i),~0\leq i\leq n\}$ and vice visa.

 \item
 The first several explicit expressions of $h(p^i)$ in terms of Chern numbers are given below:
 $$h(1)=c_n,\qquad h(p^1)=\frac{n}{2}c_n,$$
 $$ h(p^2)=\frac{n(3n+1)}{12}c_n+\frac{1}{6}c_1c_{n-1},\qquad
  h(p^3)=\frac{n^2(n+1)}{8}c_n+\frac{n}{4}c_1c_{n-1},$$
  \be \begin{split}
  h(p^4)=&\frac{n(15n^3+30n^2+5n-2)}{240}c_n+\frac{15n^2+5n-2}{60}c_1c_{n-1}
  \\
  &+\frac{(c_1^2+3c_2)c_{n-2}}{30}-
  \frac{(c_1^3-3c_1c_2+3c_3)c_{n-3}}{30},\end{split}\nonumber\ee
  $$\cdots.$$

\item
Each $h(p^{2i+1})$ can be expressed by the even powers $h(1),
h(p^2),\cdots,h(p^{2i})$. This means the set $\{h(p^i),~0\leq i\leq
n\}$, as well as $\{a_i\}$ contains the same information as that in
the set $\{h(p^{2i}),~0\leq i\leq[\frac{n}{2}]\}$. So this
``$-1$"-phenomenon gives us $[\frac{n}{2}]+1$ independent relations
on the Chern numbers.
\end{enumerate}
\end{lemma}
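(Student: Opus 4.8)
The plan is to dispatch the three parts in order, drawing only on the Hirzebruch--Riemann--Roch formula (\ref{HRR}), the identities (\ref{aihodege}) expressing the $a_i$ through falling factorials, and the duality $\chi^p=(-1)^n\chi^{n-p}$ recalled in the Introduction.

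For part (1) I would observe that $\{p^i\}_{0\le i\le n}$ and the falling factorials $\{p(p-1)\cdots(p-i+1)\}_{0\le i\le n}$ are two bases of the space of polynomials in $p$ of degree at most $n$, related by unitriangular transition matrices whose entries are the signed and unsigned Stirling numbers. Since $h(\cdot)$ is $\mathbb{Q}$-linear in its argument and $h\big(p(p-1)\cdots(p-i+1)\big)=(-1)^i i!\,a_i$ by (\ref{aihodege}), applying $h$ to the change-of-basis identities turns each $h(p^i)$ into an explicit $\mathbb{Q}$-linear combination of $a_0,\ldots,a_i$, and each $a_i$ into one of $h(1),\ldots,h(p^i)$; as both transformations are triangular with nonzero diagonal, they are invertible, which is the assertion. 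This step is formal.

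Part (2) then reduces to a bounded computation. Writing $S(i,k)$ for the Stirling numbers of the second kind, part (1) gives $h(p^i)=\sum_{k=0}^{i}S(i,k)(-1)^k k!\,a_k$, i.e. $h(1)=a_0$, $h(p^1)=-a_1$, $h(p^2)=-a_1+2a_2$, $h(p^3)=-a_1+6a_2-6a_3$, $h(p^4)=-a_1+14a_2-36a_3+24a_4$; substituting the values of $a_0,\ldots,a_4$ recalled in Subsection \ref{section2.1} and collecting Chern monomials yields the asserted formulas. One could also bypass Subsection \ref{section2.1} and read $h(p^i)$ straight off (\ref{HRR}): with $y=-e^{t}$ one has $\chi_{-e^{t}}(M)=\sum_p(-1)^p\chi^p e^{pt}$, whence $h(p^i)=\frac{d^{i}}{dt^{i}}\big|_{t=0}\int_M\prod_{j}\frac{x_j(1-e^{t-x_j})}{1-e^{-x_j}}$, and expanding the integrand in $t$ and in the Chern roots produces the same polynomials. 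Either route involves nothing but bookkeeping --- tracking the Stirling coefficients against the Bernoulli-number data buried in the $a_i$ --- and this, rather than any conceptual point, is the only place where care is needed.

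For part (3) I would start from the observation that $\chi^p=(-1)^n\chi^{n-p}$ makes $h(\cdot)$ symmetric under $p\mapsto n-p$: for any polynomial $x(p)$ one has $h\big(x(p)\big)=\sum_{p=0}^n(-1)^p\chi^p x(p)=\sum_{q=0}^n(-1)^q\chi^q x(n-q)=h\big(x(n-p)\big)$, where the middle step uses $\chi^p=(-1)^n\chi^{n-p}$ and the substitution $q=n-p$ (note $(-1)^{2n-q}=(-1)^q$). Taking $x(p)=p^{j}$ and expanding $(n-p)^{j}=\sum_{l=0}^{j}\binom{j}{l}n^{j-l}(-1)^{l}p^{l}$ gives $h(p^{j})=\sum_{l=0}^{j}\binom{j}{l}n^{j-l}(-1)^{l}h(p^{l})$; when $j=2i+1$ the $l=j$ term equals $-h(p^{2i+1})$ and cancels against the left-hand side, leaving
\[
h(p^{2i+1})=\frac12\sum_{l=0}^{2i}\binom{2i+1}{l}n^{2i+1-l}(-1)^{l}h(p^{l}).
\]
A strong induction on $i$ then rewrites the odd-index terms $h(p),h(p^{3}),\ldots,h(p^{2i-1})$ on the right through even ones, so $h(p^{2i+1})$ is a polynomial in $h(1),h(p^{2}),\ldots,h(p^{2i})$, which is the first assertion of part (3). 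Finally, since every function $p\mapsto p^i$ lies in the $(n{+}1)$-dimensional space of functions on $\{0,1,\ldots,n\}$, all $h(p^i)$ --- and hence, by part (1), all $a_i$ --- are determined by the $[\frac n2]+1$ even values $h(1),h(p^{2}),\ldots,h(p^{2[\frac n2]})$; these values are moreover independent, because an even polynomial of degree $\le 2[\frac n2]\le n$ cannot vanish at the $n+1$ points $0,1,\ldots,n$ without vanishing identically, and this is why the ``$-1$''-phenomenon yields exactly $[\frac n2]+1$ independent relations among the Chern numbers.
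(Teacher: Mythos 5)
Your proposal is correct and follows essentially the same route as the paper: part (1) via the triangular change of basis between the power basis and the falling factorials (which the paper dismisses as an elementary linear algebra exercise), part (2) by substituting the known $a_0,\ldots,a_4$, and part (3) by the $p\mapsto n-p$ symmetry followed by induction --- you invoke $\chi^p=(-1)^n\chi^{n-p}$ where the paper uses the equivalent Serre duality $h^{p,q}=h^{n-p,n-q}$, but the resulting identity $2h(p^{2i+1})=\sum_{l=0}^{2i}\binom{2i+1}{l}n^{2i+1-l}(-1)^l h(p^l)$ is exactly the paper's recursion. The explicit Stirling-number coefficients and the alternative $y=-e^{t}$ derivation of $h(p^i)$ are just more detailed versions of steps the paper leaves implicit, so no substantive difference.
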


\begin{proof}
$(1)$ is an elementary linear algebra exercise. The derivations of
$h(p^i)$ are also direct via the concrete expressions of $a_i$ in
(\ref{aihodege}). For example,
$$h(p^2)=h\big[p(p-1)+p\big]=2!\cdot a_2-a_1=\frac{n(3n+1)}{12}c_n+\frac{1}{6}c_1c_{n-1},$$

$$h(p^3)=h\big[p(p-1)(p-2)+3p(p-1)+p\big]=-6a_3+6a_2-a_1=\cdots,$$
and so on.

$(3)$ is an application of the Serre dualities
$h^{p,q}=h^{n-p,n-q}$. Indeed,
 \be\begin{split}
h(p^{2i+1})&=\sum_{p,q}(-1)^{p+q}\cdot h^{p,q}\cdot p^{2i+1}\\
&=\frac{1}{2}\sum_{p,q}(-1)^{p+q}\cdot h^{p,q}
\cdot[p^{2i+1}+(n-p)^{2i+1}]\qquad(\text{by $h^{p,q}=h^{n-p,n-q}$})\\
&=\frac{1}{2}\sum_{p,q}(-1)^{p+q}\cdot h^{p,q}\cdot[
\sum_{j=1}^{2i+1}{2i+1 \choose j}n^j(-p)^{2i+1-j}]\\
&=\frac{1}{2}\sum_{p,q}(-1)^{p+q}\cdot h^{p,q}\cdot[(2i+1)np^{2i}+\cdots]\\
&=(i+\frac{1}{2})n\cdot h(p^{2i})+(\cdots),
\end{split}\nonumber\ee
where ($\cdots$) is a sum of the terms $h(p^j)$ with $j<2i$. Using
this formula repeatedly yields the fact that $h(p^{2i+1})$ can be
expressed by $h(p^{2j}),~0\leq j\leq i$.\end{proof}

We have an analogous result to Lemma \ref{technicallemma1} for the
Poincar\'{e} polynomial, which we record in the following lemma.

\begin{lemma}\label{technicallemma2}
~
\begin{enumerate}
\item
The coefficients of the Taylor expansion of $P_y(X)$ at $y=-1$ can
be expressed in terms of the elements in the set
$$\{f(i):=\sum_{p=0}^k(-1)^p\cdot b_p\cdot
p^{i}~|~0\leq i\leq k\}.$$

\item
 The information contained in the set above is the same as that in its subset
$$\{f(2i)~|~0\leq i\leq [\frac{k}{2}]\}.$$
\end{enumerate}
\end{lemma}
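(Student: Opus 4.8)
The plan is to mirror the proof of Lemma \ref{technicallemma1}, since Lemma \ref{technicallemma2} is the exact Poincaré-polynomial analogue. For part (1), I would first record that expanding $P_y(X)=\sum_{p=0}^k b_p\,y^p$ around $y=-1$ gives Taylor coefficients which are, up to sign and factorials, the values $\sum_{p=0}^k(-1)^p b_p\cdot p(p-1)\cdots(p-i+1)$ — exactly as in (\ref{aihodege}), with $b_p$ playing the role of $(-1)^q h^{p,q}$ summed over $q$. Since the falling factorials $p(p-1)\cdots(p-i+1)$ for $0\le i\le k$ and the monomials $p^i$ for $0\le i\le k$ are two bases of the space of polynomials in $p$ of degree $\le k$ (the change of basis being given by Stirling numbers, an invertible lower-triangular matrix), each Taylor coefficient is a linear combination of the $f(i)$ and conversely. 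This is the same elementary linear algebra exercise invoked for part (1) of Lemma \ref{technicallemma1}, so I would simply state it with a one-line justification rather than grinding through the Stirling-number bookkeeping.

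For part (2), the key input is Poincaré duality $b_p=b_{k-p}$, which plays precisely the role that Serre duality $h^{p,q}=h^{n-p,n-q}$ played in part (3) of Lemma \ref{technicallemma1}. I would symmetrize: for odd exponent $2i+1$,
\[
f(2i+1)=\sum_{p=0}^k(-1)^p b_p\,p^{2i+1}
=\tfrac12\sum_{p=0}^k(-1)^p b_p\big[p^{2i+1}+(k-p)^{2i+1}\big],
\]
using $b_p=b_{k-p}$ together with the fact that $(-1)^{k-p}=\pm(-1)^p$ with a global sign $(-1)^k$ that factors out and is harmless. Expanding $(k-p)^{2i+1}$ by the binomial theorem, the $j=0$ term $p^{2i+1}$ cancels half of the leading term and the remaining terms all carry powers $p^{m}$ with $m\le 2i$, so $f(2i+1)$ becomes a linear combination of $f(2i),f(2i-1),\dots,f(0)$. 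Induction on $i$ (using that the lower odd $f(2j+1)$ are themselves already reduced to even ones) then shows every odd-indexed $f$ lies in the span of $\{f(0),f(2),\dots,f(2\lfloor k/2\rfloor)\}$; combined with part (1) this gives the claim.

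One point needs a little care that did not arise in Lemma \ref{technicallemma1}: there the global Serre-duality sign is $(-1)^n$ on $h^{p,q}$ but the double sum $(-1)^{p+q}$ is invariant under $(p,q)\mapsto(n-p,n-q)$ up to that same $(-1)^{2n}=1$, so it was invisible; here, under $p\mapsto k-p$ we have $(-1)^p\mapsto(-1)^{k-p}=(-1)^k(-1)^p$, so when $k$ is odd the symmetrization identity reads $f(2i+1)=-\tfrac12\sum(-1)^p b_p[p^{2i+1}-(k-p)^{2i+1}]$ instead, but the leading terms still cancel appropriately and the conclusion is unchanged. I expect the main (very mild) obstacle to be stating this sign dichotomy cleanly — handling the parity of $k$ uniformly — rather than any genuine mathematical difficulty; the argument is otherwise a direct transcription of the proof already given for the $\chi_y$-genus.
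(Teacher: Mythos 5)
Your overall strategy is exactly the paper's: the proof given there is literally ``the same as Lemma \ref{technicallemma1}, with Poincar\'e duality playing the role of Serre duality,'' and your part (1) (the unitriangular change of basis between falling factorials and monomials) together with the symmetrization $p\mapsto k-p$ in part (2) is precisely that argument. For $k$ even it is correct and complete.

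The genuine problem is your final paragraph. When $k$ is odd, the sign $(-1)^k=-1$ does not merely flip the identity to $f(2i+1)=\tfrac12\sum_p(-1)^pb_p\big[p^{2i+1}-(k-p)^{2i+1}\big]$ while leaving the cancellation intact: expanding gives $p^{2i+1}-(k-p)^{2i+1}=2p^{2i+1}-(2i+1)kp^{2i}+\cdots$, so the leading terms \emph{reinforce} rather than cancel, the identity collapses to $0=(2i+1)k\,f(2i)-\binom{2i+1}{2}k^2f(2i-1)+\cdots$, and what you obtain are relations expressing the \emph{even} $f$'s in terms of lower-index ones --- the wrong direction entirely. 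Indeed part (2) is false for odd $k$: on the space of Poincar\'e-symmetric Betti vectors, which has dimension $[\frac{k}{2}]+1$, the functional $f(0)=\sum_p(-1)^pb_p$ vanishes identically (the terms $p$ and $k-p$ cancel in pairs), so the even-index functionals span at most $[\frac{k}{2}]$ dimensions, whereas $f(0),\dots,f(k)$ span all $[\frac{k}{2}]+1$ by Vandermonde. Concretely, $T^5$ has $f(0)=f(2)=f(4)=0$ but $f(5)=-5!\neq 0$ (and already for $S^1$, $f(0)=0$ while $f(1)=-b_1$), so the odd values cannot be recovered from the even ones. The lemma --- and the paper's one-sentence proof --- must therefore be read with $k$ even, which is the only case ever used, since every application is to a complex or symplectic manifold of real dimension $k=2n$. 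Your instinct that the sign required care was right; the claim that ``the conclusion is unchanged'' is exactly where the argument breaks.
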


\begin{proof}
The proof of this lemma is almost identically the same as that in
Lemma \ref{technicallemma1}. We only need to note that, in the proof
of $(2)$ in this lemma, Poincar\'{e} dualities play the role of the
Serre dualities as in the proof of $(3)$ in Lemma
\ref{technicallemma1}.
\end{proof}

The expressions $f(2i)$ and $h(p^i)$ highlighted in Lemmas
\ref{technicallemma1} and \ref{technicallemma2} shall be
investigated intensively in the next section.

\section{Applications of ``$-1$"-phenomenon to K\"{ahler} and hyperK\"{a}hler manifolds}\label{section3}
 With the preliminaries presented in the last
section at hand, we now give some applications via some suitable
manipulations on the above-introduced modified coefficients $h(p^i)$
and $f(2i)$.

In order to achieve our purpose, we need to bridge a link between
the Betti numbers $b_i$ and the Hodge numbers $h^{p,q}$ involving in
$f(2i)$ and $h(p^i)$ respectively. Recall that the Hodge theory
imposes intimate relations among the Betti numbers and the Hodge
numbers for compact K\"{a}hler manifolds. So from now on we assume
throughout this section that $M$ be a complex $n$-dimensional
compact connected K\"{a}hler manifold. Then its Betti numbers $b_i$
and Hodge numbers $h^{p,q}$ satisfy the following well-known
relations (\cite[p. 116]{GH}):
\be\label{Hodecom}b_i=\sum_{p+q=i}h^{p,q},\qquad h^{p,p}\geq
1,\qquad h^{p,q}=h^{n-p,n-q}=h^{q,p},\qquad (0\leq i\leq 2n,~0\leq
p,q\leq n).\ee The first equality is a consequence of the Hodge
decomposition theorem, the second one is due to the fact that any
$p$-th power of the K\"{a}hler form represents a nonzero cohomology
class in $H^{p,p}(M)$, and the third one comes from the Serre
duality which we have mentioned in the introduction and the complex
conjugation.

Now it is time for us to illustrate some applications via comparing
the modified coefficients $f(2i)$ and $h(p^i)$ simultaneously.

Comparing $f(0)$ and $h(1)$ leads to the well-known fact for the
Euler characteristic:
$$f(0)=\sum_{i=0}^{2n}(-1)^i\cdot b_i\stackrel{(\ref{Hodecom})}{=}\sum_{p,q=0}^n(-1)^{p+q}
h^{p,q}=h(1)=c_n.$$

Now we consider $f(2)$:
 \be\label{2expansion}\begin{split}
f(2)=\sum_{i=0}^{2n}(-1)^i\cdot b_i\cdot i^2
&=\sum_{p,q=0}^n(-1)^{p+q}\cdot
h^{p,q}\cdot(p+q)^2\qquad(\text{by $(\ref{Hodecom})$})\\
&=h(p^2)+2h(pq)+h(q^2)\qquad (\text{by $(\ref{h(x)})$})\\
&=2h(p^2)+2h(pq).\qquad\text{(by $h^{p,q}=h^{q,p}$)}\end{split}\ee

We know through Lemma \ref{technicallemma1} that $h(p^2)$ can be
expressed in terms of the Chern numbers $c_n$ and $c_1c_{n-1}$. Thus
in order to obtain some nontrivial relations from
(\ref{2expansion}), we need to impose more restrictions on $h^{p,q}$
in order to deal with another term $h(pq)$. Here we impose two
different restrictions. The first one leads to the following result
due to Salamon which gives a restriction on the Betti numbers of
hyperK\"{a}hler manifolds (\cite[Theorem 4.1, Corollary 4.2]{Sa1}).

\begin{theorem}[Salamon]\label{theorem1}
Suppose $M$ is a compact K\"{a}hler manifold whose complex dimension
$n$ is even and Hodge numbers are
 ``invariant by mirror symmetry" in the sense that $h^{p,q}=h^{p,n-q}$. Then
 the Chern number $c_1c_{n-1}$ of $M$ can be expressed in terms of its Betti numbers:
 \be\label{restriction1}c_1c_{n-1}=\sum_{i=0}^{2n}
 (-1)^i\cdot b_i\cdot[3i^2-n(3n+\frac{1}{2})].\ee
In particular, this gives a restriction on the Betti numbers of a
 compact hyper-K\"{a}hler manifold whose complex dimension is $n$:
$$\sum_{i=0}^{2n}
 (-1)^i\cdot b_i\cdot[3i^2-n(3n+\frac{1}{2})]=0.$$

\end{theorem}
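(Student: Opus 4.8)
The plan is to exploit the identity \eqref{2expansion}, which already expresses $f(2)$ in terms of $h(p^2)$ and the cross-term $h(pq)$, and to use the extra hypothesis $h^{p,q}=h^{p,n-q}$ precisely to pin down $h(pq)$. First I would compute $h(pq)$ under the mirror-symmetry hypothesis: replacing $q$ by $n-q$ in the alternating sum \eqref{h(x)} and using that $(-1)^{p+q}=(-1)^{p+(n-q)}$ when $n$ is even, one gets $h(pq)=h\big(p(n-q)\big)=n\,h(p^1)-h(pq)$, hence $2h(pq)=n\,h(p^1)$, i.e. $h(pq)=\tfrac{n}{2}h(p^1)$. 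Since Lemma \ref{technicallemma1}(2) gives $h(p^1)=\tfrac{n}{2}c_n$, this yields $h(pq)=\tfrac{n^2}{4}c_n$.

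Next I would substitute into \eqref{2expansion}: $f(2)=2h(p^2)+2h(pq)=2h(p^2)+\tfrac{n^2}{2}c_n$. Now plug in the explicit formula $h(p^2)=\tfrac{n(3n+1)}{12}c_n+\tfrac16 c_1c_{n-1}$ from Lemma \ref{technicallemma1}(2). Solving the resulting linear equation for $c_1c_{n-1}$ gives $c_1c_{n-1}$ as a combination of $f(2)$ and $c_n=f(0)$; writing $f(2)=\sum(-1)^i b_i\,i^2$ and $f(0)=\sum(-1)^i b_i$ and collecting the coefficient of each $b_i$ should reproduce exactly \eqref{restriction1}, namely $c_1c_{n-1}=\sum_{i=0}^{2n}(-1)^i b_i\big[3i^2-n(3n+\tfrac12)\big]$. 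I would double-check the numerical coefficients here since the factor of $3$ and the constant $n(3n+\tfrac12)$ must come out precisely; this bookkeeping is the only place an error could creep in, but it is routine.

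Finally, for the hyperK\"ahler case I would invoke the standard fact that a compact hyperK\"ahler manifold of complex dimension $n$ (so $n$ is automatically even) satisfies $c_1=0$ — the holomorphic symplectic form trivializes the canonical bundle — and therefore $c_1c_{n-1}=0$. Substituting $c_1c_{n-1}=0$ into \eqref{restriction1} immediately gives the asserted vanishing relation on the Betti numbers. One should also remark that hyperK\"ahler manifolds do satisfy the mirror-symmetry hypothesis $h^{p,q}=h^{p,n-q}$ (this follows from the $SU(2)$-action on cohomology, or equivalently from the nowhere-vanishing holomorphic symplectic form inducing isomorphisms $H^{p,q}\cong H^{p,n-q}$), so the theorem genuinely applies.

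I do not anticipate a serious obstacle: the argument is a direct manipulation of the already-established identities, and the ``hard part,'' such as it is, is merely verifying that the coefficient of $b_i$ simplifies to $3i^2-n(3n+\tfrac12)$ after clearing denominators.
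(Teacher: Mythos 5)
Your proposal is correct and follows essentially the same route as the paper: the same reindexing trick $q\mapsto n-q$ to get $h(pq)=\tfrac{n}{2}h(p)$, the same substitution into \eqref{2expansion} together with Lemma \ref{technicallemma1}(2), and the same conclusion for hyperK\"ahler manifolds via $c_1c_{n-1}=0$ (the paper cites vanishing of all odd Chern classes where you cite triviality of the canonical bundle, but this is an immaterial difference). The arithmetic checks out: $f(2)=n(n+\tfrac16)c_n+\tfrac13 c_1c_{n-1}$ yields exactly \eqref{restriction1}.
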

\begin{proof}
Under our assumptions we have \be
\begin{split}h(pq)&=\sum_{p,q=0}^n(-1)^{p+q}\cdot h^{p,q}\cdot
pq\\
&=\sum_{p,q=0}^n(-1)^{p+n-q}\cdot h^{p,q}\cdot p(n-q)~(\text{by
$h^{p,q}=h^{p,n-q}$})\\
&=\sum_{p,q=0}^n(-1)^{p+q}\cdot h^{p,q}\cdot p(n-q)~(\text{by $n$ is even})\\
&=n\cdot h(p)-h(pq).
\end{split}\nonumber\ee

Thus $h(pq)=\frac{n}{2}h(p)$, which, together with
(\ref{2expansion}), yields
\be\begin{split}\sum_{i=0}^{2n}(-1)^i\cdot b_i\cdot i^2&=
2h(p^2)+n\cdot h(p)\\
&=\frac{n(3n+1)}{6}c_n+\frac{1}{3}c_1c_{n-1}+\frac{n^2}{2}c_n
~(\text{by Lemma \ref{technicallemma1}})\\
&=\frac{1}{3}c_1c_{n-1}+n(n+\frac{1}{6})\sum_{i=0}^{2n}(-1)^i\cdot
b_i.~\big(\text{by $c_n=\sum_{i=0}^{2n}(-1)^i\cdot b_i$}\big)
\end{split}\nonumber\ee
Singling out the term $c_1c_{n-1}$ in the equality above leads to
(\ref{restriction1}).

Recall that a \emph{hyperK\"{a}hler manifold} is a compact
Riemannian manifold whose real dimension is divisible by $4$, say
$4m$, and holonomy group is contained in $\text{Sp}(m)$, which is a
higher-dimensional analogue to $K3$-surfaces. It is well-known that
a hyperK\"{a}hler manifold possesses a family of K\"{a}hler
structures parameterized by a $2$-dimensional sphere. Moreover, its
Hodge numbers are ``invariant by mirror symmetry" in the sense that
$h^{p,q}=h^{p,2m-q}$ and all its odd Chern classes $c_{2i+1}$
($0\leq i\leq m-1$) vanish in $H^{\ast}(M,\mathbb{R})$.
HyperK\"{a}hler manifolds form an important subclass in compact
K\"{a}hler manifolds. We refer the reader to \cite{Hitchin} and
\cite{Huy} for an account of them and their basic properties. So
hyperK\"{a}hler manifolds satisfy the conditions assumed in this
theorem, which gives the required restriction.
\end{proof}\bibliographystyle{amsplain}

We say a compact K\"{a}hler manifold $M$ are of \emph{pure type} if
its Hodge numbers satisfy $h^{p,q}=0$ whenever $p\neq q.$ Many
important compact K\"{a}hler manifolds are of pure type (cf. Remark
\ref{remark}). Our second restrction on $h^{p,q}$ involving the
notion of pure
 type leads to the following result,
 which is an improvement of the author's previous result (\cite[Theorem 1.3]{Li1}).

\begin{theorem}\label{theorem2}
The Chern number $c_1c_{n-1}$ of any $n$-dimensional compact
K\"{a}hler manifold $M$ has a lower bound in terms of its Betti
numbers as follows:
$$c_1c_{n-1}\geq\frac{1}{2}\big\{
\sum_{\text{$i$ even}}b_i[3i^2-n(3n+1)]- \sum_{\text{$i$
odd}}b_i[9i^2-n(3n+1)]\big\},$$ where the equality holds if and only
if $M$ is of pure type.
\end{theorem}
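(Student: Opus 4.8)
The plan is to start from the identity $(\ref{2expansion})$, namely $f(2)=2h(p^2)+2h(pq)$, and to bound the cross term $h(pq)$ rather than evaluate it exactly, since without the mirror symmetry assumption of Theorem \ref{theorem1} we can no longer solve for it. The key observation is that $h(pq)$ admits a clean algebraic rewriting: using the Serre duality $h^{p,q}=h^{n-p,n-q}$ exactly as in part $(3)$ of Lemma \ref{technicallemma1}, I would symmetrize $pq$ under $(p,q)\mapsto(n-p,n-q)$ to get
\[
h(pq)=\tfrac12\sum_{p,q}(-1)^{p+q}h^{p,q}\big[pq+(n-p)(n-q)\big]
     =h(p^2)-n\,h(p)+\tfrac{n^2}{2}h(1),
\]
where I have also used $h(pq)$ is symmetric in $p,q$ so that $\sum(-1)^{p+q}h^{p,q}(p q + pq) $ issues are handled, and the mixed term $pq+(n-p)(n-q)=2pq-n(p+q)+n^2$ combined with $h(pq)$ symmetric gives $2h(pq)=2h(p^2)-2nh(p)+n^2h(1)$ after applying $h^{p,q}=h^{q,p}$. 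Wait — more carefully: symmetrizing gives $h(pq)=h(p^2)-n h(p)+\tfrac{n^2}{2}h(1)$ only after using $h^{p,q}=h^{q,p}$ to replace $\sum(-1)^{p+q}h^{p,q}p q$-type sums; this is the step to do cleanly. At this point everything in $f(2)$ except possibly a correction term is expressible via $h(1),h(p),h(p^2)$, hence by Lemma \ref{technicallemma1} in terms of $c_n$ and $c_1c_{n-1}$ — but the inequality must enter somewhere, so the bound on $h(pq)$ cannot be an equality in general.

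The inequality comes from the pure-type defect. I would instead isolate the quantity measuring failure of pure type. Since $b_i=\sum_{p+q=i}h^{p,q}$ splits each Betti number, and $h^{p,q}=h^{q,p}\ge 0$ with $h^{p,p}\ge 1$, I expect the right move is to compare $f(2)=\sum_{p,q}(-1)^{p+q}h^{p,q}(p+q)^2$ against what it would be if $M$ were of pure type, in which case only $h^{p,p}$ survive and $(p+q)^2=4p^2$, giving the "pure-type value" $4h(p^2)$. The difference $f(2)-(\text{pure-type value})=\sum_{p\neq q}(-1)^{p+q}h^{p,q}[(p+q)^2-2p^2-2q^2]=-\sum_{p\neq q}(-1)^{p+q}h^{p,q}(p-q)^2$, and since $p+q$ and $(p-q)^2$ have the same parity behaviour one checks $(-1)^{p+q}(p-q)^2\ge 0$ term by term when weighted appropriately — actually $(-1)^{p+q}=(-1)^{p-q}$ and $(p-q)^2\ge 0$, so the sign of each summand is $(-1)^{p-q}$, which is $+1$ when $p-q$ is even and $-1$ when odd; this is precisely why the coefficients $3i^2$ versus $9i^2$ split by parity of $i$. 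So the genuinely useful inequality is: the even-$i$ contributions have a favourable sign and the odd-$i$ contributions the unfavourable one, and one bounds $h(pq)$ from one side using $h^{p,q}\ge 0$ together with $h^{p,p}\ge 1$ accounting for the diagonal.

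Concretely, the steps I would carry out are: (i) expand $f(2)$ via $(\ref{Hodecom})$ and separate diagonal ($p=q$) from off-diagonal terms; (ii) on the off-diagonal, group $h^{p,q}$ with $h^{q,p}$ and note the combined weight is $2(-1)^{p+q}h^{p,q}$ times a symmetric polynomial, of determinate sign according to the parity of $p+q$; (iii) drop the off-diagonal terms with the "wrong" sign using $h^{p,q}\ge 0$ and keep track of the diagonal ones via $h^{p,p}\ge 1$, which will produce exactly a sum $\sum_{i\text{ even}}b_i[\cdots]-\sum_{i\text{ odd}}b_i[\cdots]$ after re-expressing the surviving diagonal sum through the $b_i$; (iv) substitute the formula for $h(p^2)$ from Lemma \ref{technicallemma1} and the identity $c_n=\sum(-1)^ib_i$ to convert $2h(p^2)$ into the stated combination involving $c_1c_{n-1}$, then solve the resulting inequality for $c_1c_{n-1}$; (v) observe that every inequality used is an equality precisely when all off-diagonal $h^{p,q}$ with $p\neq q$ vanish, i.e.\ when $M$ is of pure type, giving the equality characterization. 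The main obstacle I anticipate is the bookkeeping in step (iii): correctly matching the parity-dependent coefficients $3i^2-n(3n+1)$ and $9i^2-n(3n+1)$ with the symmetrized polynomial $(p+q)^2$ versus $2p^2$ on the diagonal after the off-diagonal terms are discarded, and making sure the overall factor of $\tfrac12$ and the sign conventions are consistent with $(\ref{2expansion})$ and with the formula $h(p^2)=\tfrac{n(3n+1)}{12}c_n+\tfrac16 c_1c_{n-1}$. Everything else is routine once the sign pattern in step (ii) is pinned down.
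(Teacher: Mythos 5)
Your proposal never arrives at the step that actually drives the paper's proof. The paper bounds the cross term in $(\ref{2expansion})$ by
\[
h(pq)=\sum_{p,q}(-1)^{p+q}h^{p,q}\,pq\;\le\;\sum_{p,q}h^{p,q}\,pq\;\le\;\sum_{p,q}h^{p,q}\Big(\tfrac{p+q}{2}\Big)^{2}=\tfrac14\sum_{i}b_i\,i^2,
\]
first discarding the alternating sign (legitimate since $pq\ge 0$ and $h^{p,q}\ge 0$) and then applying AM--GM; the second inequality is an equality exactly when $h^{p,q}(p-q)^2=0$ for all $p,q$, i.e.\ when $M$ is of pure type, which also forces equality in the first. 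Feeding $2h(pq)\le\frac12\sum_i b_i i^2$ back into $f(2)=2h(p^2)+2h(pq)$ and using $h(p^2)=\frac{n(3n+1)}{12}c_n+\frac16 c_1c_{n-1}$ together with $c_n=\sum_i(-1)^ib_i$ yields the stated bound. This two-line estimate is the whole proof, and it is absent from your plan.

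Both of your suggested routes have concrete defects. The Serre-duality symmetrization gives $\frac12\bigl[pq+(n-p)(n-q)\bigr]=pq+\frac{n^2}{2}-\frac n2(p+q)$, so the identity collapses to $0=\frac{n^2}{2}h(1)-n\,h(p)$ (merely the known fact $h(p)=\frac n2 c_n$) and determines nothing about $h(pq)$; your claimed formula $h(pq)=h(p^2)-nh(p)+\frac{n^2}{2}h(1)$ is false (for an elliptic curve the right-hand side is $0$ while $h(pq)=1$). The ``pure-type defect'' route begins from a correct identity, $2h(pq)-2h(p^2)=-\sum_{p\ne q}(-1)^{p+q}h^{p,q}(p-q)^2$, but stops exactly where the difficulty lies: after dropping the terms of unfavourable sign you are left with $\sum_{p+q\ \text{odd}}h^{p,q}(p-q)^2$, a sum over individual Hodge numbers that you never explain how to dominate by Betti numbers (the diagonal terms you propose to ``keep track of'' contribute nothing since $(p-p)^2=0$, and $h^{p,p}\ge1$ plays no role anywhere in the proof). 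If one completes that route in the natural way via $(p-q)^2\le(p+q)^2$, one obtains a bound with coefficient $6i^2$ rather than $9i^2$ on the odd Betti numbers --- a genuinely different inequality whose equality case is \emph{not} pure type (an elliptic curve attains it) --- so the stated equality characterization would not follow from your argument without substantial extra work.
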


\begin{proof}
We first claim that for any $n$-dimensional compact K\"{a}hler
manifold we have \be\label{inequality}
4h(pq)\leq\sum_{i=0}^{2n}b_i\cdot i^2,\ee where the equality holds
if and only if $M$ is of pure type. Indeed, \be
h(pq)=\sum_{p,q=0}^n(-1)^{p+q}\cdot h^{p,q}\cdot
pq\leq\sum_{p,q=0}^n h^{p,q}\cdot
pq\leq\sum_{p,q=0}^nh^{p,q}(\frac{p+q}{2})^2\stackrel{(\ref{Hodecom})}{=}\frac{1}{4}\sum_{i=0}^{2n}b_i\cdot
i^2.\nonumber\ee

Note that in the formula above the equality in the second $``\leq"$
holds if and only if $h^{p,q}=0$ whenever $p\neq q$, which also
implies the validity of the equality in the first $``\leq"$.

 Thus (\ref{inequality}), together with (\ref{2expansion}), yields
\be\begin{split}\sum_{i=0}^{2n}(-1)^i\cdot b_i\cdot i^2&=
2h(p^2)+2\cdot h(pq)\\
&\leq\frac{n(3n+1)}{6}c_n+\frac{1}{3}c_1c_{n-1}+\frac{1}{2}\sum_{i=0}^{2n}b_i\cdot i^2\\
&=\frac{n(3n+1)}{6}\sum_{i=0}^{2n}(-1)^i\cdot
b_i+\frac{1}{3}c_1c_{n-1}+\frac{1}{2}\sum_{i=0}^{2n}b_i\cdot i^2.
\end{split}\nonumber\ee
Rewriting this inequality by singling out the term $c_1c_{n-1}$
leads to the desired one in Theorem \ref{theorem2}.
\end{proof}

\begin{remark}\label{remark}
~
\begin{enumerate}
\item
An additional assumption in \cite[Theorem 1.3]{Li1} that the odd
Betti numbers $b_{2i+1}$ all vanish has been removed.

\item
As we have remarked in \cite[Remark 1.2]{Li1}, many important
compact K\"{a}hler manifolds are of pure type. For instance, the
flag manifold $G/P$ (\cite[\S14.10]{BH}), where $G$ is a complex
semisimple linear algebraic group and $P$ is a parabolic subgroup,
the Fano contact manifolds (\cite[p. 118]{LS}), and the nonsingular
projective toric varieties (\cite[p. 106]{Fulton}).

\item
It is well-known that the simplest Chern number $c_n$ of any
$n$-dimensional compact almost-complex manifold is equal to the
alternating sum of its Betti numbers. Theorem \ref{theorem2}
illustrates an interesting phenomenon that, for any $n$-dimensional
compact K\"{a}hler manifold, the next-to-simplest Chern number
$c_1c_{n-1}$ can also be related to Betti numbers.
\end{enumerate}
\end{remark}

Recall that a \emph{Calabi-Yau} manifold (in the weak sense) is a
compact K\"{a}hler manifold whose first Chern class $c_1=0$ in
$H^2(M,\mathbb{R})$. Clearly all hyperK\"{a}hler manifolds are
Calabi-Yau manifolds. Theorem \ref{theorem2} has the following
interesting consequence on the Betti numbers of Calabi-Yau
manifolds, which, to the author's best knowledge, should be new.

\begin{corollary}
The Betti numbers of any complex $n$-dimensional compact K\"{a}hler
manifold whose Chern number $c_1c_{n-1}=0$ satisfy
$$\sum_{\text{$i$
odd}}b_i[9i^2-n(3n+1)]\geq\sum_{\text{$i$ even}}b_i[3i^2-n(3n+1)],$$
where the equality holds if and only if it is of pure type. In
particular, this inequality holds for Calabi-Yau manifolds.
\end{corollary}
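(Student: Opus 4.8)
The plan is to derive this corollary as an immediate specialization of Theorem \ref{theorem2}. That theorem already gives, for every $n$-dimensional compact K\"{a}hler manifold $M$, the inequality
$$c_1c_{n-1}\geq\frac{1}{2}\big\{\sum_{\text{$i$ even}}b_i[3i^2-n(3n+1)]-\sum_{\text{$i$ odd}}b_i[9i^2-n(3n+1)]\big\},$$
with equality precisely when $M$ is of pure type. So the first step is simply to impose the hypothesis $c_1c_{n-1}=0$, substitute it into the left-hand side, and multiply through by $2$, which yields
$$0\geq\sum_{\text{$i$ even}}b_i[3i^2-n(3n+1)]-\sum_{\text{$i$ odd}}b_i[9i^2-n(3n+1)].$$
Transposing the odd-index sum to the other side produces exactly the asserted inequality, and the ``if and only if'' statement is inherited verbatim from the equality clause in Theorem \ref{theorem2}.

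The second step is to justify the ``in particular'' sentence about Calabi-Yau manifolds, i.e.\ to check that the weak Calabi-Yau condition $c_1=0$ in $H^2(M,\mathbb{R})$ forces the \emph{Chern number} $c_1c_{n-1}$ to vanish. This is the only point requiring a word of comment rather than pure bookkeeping, and it is immediate: the Chern number $c_1c_{n-1}$ is the evaluation of the top-degree class $c_1\cup c_{n-1}\in H^{2n}(M,\mathbb{R})$ against the fundamental class, and since $c_1=0$ in $H^2(M,\mathbb{R})$ we get $c_1\cup c_{n-1}=0$ in $H^{2n}(M,\mathbb{R})$, hence $c_1c_{n-1}=0$. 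Therefore the inequality of the first step applies to all (weak) Calabi-Yau manifolds.

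There is essentially no genuine obstacle here: the corollary is a direct consequence of Theorem \ref{theorem2}, and the hardest part is merely the harmless observation that a cohomology relation $c_1=0$ passes to the corresponding Chern number. One could optionally add a remark recalling, as in the proof of Theorem \ref{theorem1}, that hyperK\"{a}hler manifolds are Calabi-Yau (indeed all their odd Chern classes vanish), so the corollary in particular recovers constraints on the Betti numbers of hyperK\"{a}hler manifolds; but this is not needed for the statement as phrased.
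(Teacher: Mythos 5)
Your proposal is correct and matches the paper's intent exactly: the corollary is stated as an immediate consequence of Theorem \ref{theorem2} (the paper gives no separate proof), obtained by setting $c_1c_{n-1}=0$ and rearranging, with the equality case inherited verbatim. Your additional observation that $c_1=0$ in $H^2(M,\mathbb{R})$ forces the Chern number $c_1c_{n-1}$ to vanish is the right (and only) point needing comment for the ``in particular'' clause.
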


\begin{remark}
This inequality is sharp in the sense that its equality can be
attained for some manifolds. For $n=2$ the most famous examples are
Enriques surfaces, which are of pure type and $c_1=0$ in
$H^2(M,\mathbb{R})$. For general $n$, recall that in Remark
\ref{remark} we commented that nonsingular projective toric
varieties are of pure type. Their Chern classes can be explicitly
described by their irreducible $T$-divisors (\cite[p. 109]{Fu}). So
we can choose appropriately a fan $\Delta$ such that the sum of the
irreducible $T$-divisors is zero. This means by \cite[p. 109,
Lemma]{Fu} that the first Chern class of the corresponding
nonsingular projective toric variety $X(\Delta)$ is zero.
\end{remark}

In the discussions above the simultaneous investigations on $f(2)$
and $h(p^2)$ have produced plentiful results. In particular, it
provides a lower bound for the next-to-simplest Chern number
$c_1c_{n-1}$ of $n$-dimensional compact K\"{a}hler manifolds in
terms of their Betti numbers in Theorem \ref{theorem2}. However, as
we have discussed in the proof of Theorem \ref{theorem1},
$n$-dimensional hyperK\"{a}hler manifolds have vanishing odd Chern
classes and so $c_1c_{n-1}=0$ automatically. This means that their
next-to-simplest Chern numbers are $c_2c_{n-2}$. A natural question
is whether or not we have a lower bound in terms of Betti numbers
for the Chern numbers $c_2c_{n-2}$ of hyperK\"{a}hler manifolds. The
answer is yes and the method is to continue our employment of $f(4)$
and $h(p^4)$ simultaneously. To be more precise, we have the
following result.

\begin{theorem}
Suppose $M$ is a hyperK\"{a}hler manifold of complex dimension $n$,
which is necessarily even by definition. Then $c_2c_{n-2}$, the
next-to-simplest Chern number of $M$, has the following lower bound
in terms of Betti numbers:
\be\label{inequality2}\begin{split} &c_2c_{n-2}\\
\geq&\frac{1}{24} \big\{\sum_{\text{$i$
even}}b_i[75i^4-n(75n^3+90n^2+5n-2)]-\sum_{\text{$i$
odd}}b_i[165i^4-n(75n^3+90n^2+5n-2)]\big\}.\end{split}\ee
\end{theorem}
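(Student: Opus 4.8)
The plan is to mimic the proof of Theorem \ref{theorem2}, one level higher: instead of comparing the degree-2 data $f(2)$ with $h(p^2)$, I would compare the degree-4 data $f(4)$ with $h(p^4)$, using the hyperK\"{a}hler symmetry $h^{p,q}=h^{p,n-q}$ (valid since $n$ is even) to control the mixed terms, and an inequality of the same flavour as (\ref{inequality}) to single out $c_2c_{n-2}$. First I would expand
\be\begin{split}
f(4)=\sum_{i=0}^{2n}(-1)^i b_i i^4
&=\sum_{p,q=0}^n(-1)^{p+q}h^{p,q}(p+q)^4\\
&=h(p^4)+4h(p^3q)+6h(p^2q^2)+4h(pq^3)+h(q^4)\\
&=2h(p^4)+8h(p^3q)+6h(p^2q^2),
\end{split}\nonumber\ee
using $h^{p,q}=h^{q,p}$ in the last line. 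So I need to understand $h(p^3q)$ and $h(p^2q^2)$.

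Next I would use the mirror symmetry $h^{p,q}=h^{p,n-q}$. Applying it to $h(p^3q)$ and splitting $q=\tfrac n2+(q-\tfrac n2)$ as in the proof of part (3) of Lemma \ref{technicallemma1}, the odd-in-$(q-\tfrac n2)$ part cancels, which expresses $h(p^3q)$ as a combination of $h(p^3)$ and $h(p^3q^2)$; similarly $h(p^2q^2)$ reduces, and iterating the symmetry eventually writes everything in terms of $h(p^a)$ for $a\le 4$ (which Lemma \ref{technicallemma1} gives in Chern numbers: $h(1)=c_n$, $h(p)=\tfrac n2 c_n$, $h(p^2)$, $h(p^3)$, $h(p^4)$, the last two involving $c_1c_{n-1}$, $c_1^2c_{n-2}$, etc.) plus one genuinely mixed term that cannot be so reduced, namely $h(p^2q^2)$ together with a leftover piece of $h(p^3q)$. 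Because $M$ is hyperK\"{a}hler its odd Chern classes vanish, so $c_1c_{n-1}=0$, $c_3c_{n-3}=0$, $c_1^3 c_{n-3}=c_1^2 c_{n-2}=0$, and the H-R-R expressions simplify dramatically: the only surviving Chern numbers up to this weight are $c_n$ and $c_2c_{n-2}$. Thus $h(p^4)$ becomes $\tfrac{n(15n^3+30n^2+5n-2)}{240}c_n+\tfrac{1}{10}c_2c_{n-2}$, and the remaining mixed terms are bounded below (with equality iff pure type) by the same trick used for (\ref{inequality}):
\be
h(p^2q^2)=\sum_{p,q}(-1)^{p+q}h^{p,q}p^2q^2\le\sum_{p,q}h^{p,q}p^2q^2\le\sum_{p,q}h^{p,q}\Big(\tfrac{p+q}{2}\Big)^4=\tfrac{1}{16}\sum_i b_i i^4,\nonumber\ee
with an analogous bound for the residual $h(p^3q)$ piece after it has been symmetrized into even form.

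Assembling these, $f(4)=2h(p^4)+(\text{mixed terms})$ becomes an identity expressing $\sum(-1)^i b_i i^4$ as a known multiple of $c_n$, plus $\tfrac15 c_2c_{n-2}$ (from $2h(p^4)$), plus mixed terms which are $\le$ a positive multiple of $\sum b_i i^4$; using $c_n=\sum(-1)^i b_i$ to eliminate $c_n$ and isolating $c_2c_{n-2}$ yields the stated inequality, with equality precisely when every "$\le$" above is an equality, i.e. when $M$ is of pure type. The main obstacle I anticipate is bookkeeping: correctly reducing $h(p^3q)$ and $h(p^2q^2)$ via repeated use of $h^{p,q}=h^{p,n-q}$ down to $\{h(p^a)\}_{a\le 4}$ plus a single controllable mixed remainder, and then getting the numerical coefficients ($75$, $90$, $5$, $-2$, the split $75$ vs.\ $165$ between even and odd $i$, and the overall $\tfrac1{24}$) to come out exactly right — this is the same kind of elementary-but-delicate computation as in Theorems \ref{theorem1} and \ref{theorem2}, just with fourth powers, so no conceptual difficulty is expected, only care with the algebra and with tracking which inequalities are tight for pure-type manifolds.
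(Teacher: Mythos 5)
Your proposal follows essentially the same route as the paper: expand $f(4)=2h(p^4)+8h(p^3q)+6h(p^2q^2)$, use the vanishing of odd Chern classes to reduce $h(p^4)$ to $\frac{n(15n^3+30n^2+5n-2)}{240}c_n+\frac{1}{10}c_2c_{n-2}$, use $n$ even and $h^{p,q}=h^{p,n-q}$ to evaluate $h(p^3q)=\frac{n}{2}h(p^3)$ exactly (there is in fact no residual mixed piece, since $q$ occurs only linearly), and bound $h(p^2q^2)\leq\frac{1}{16}\sum_ib_ii^4$ as in (\ref{inequality}). The only caveat is your closing remark on the equality case: equality would require $M$ to be of pure type, which is incompatible with $h^{p,q}=h^{p,n-q}$ (e.g.\ $h^{0,n}=h^{0,0}=1\neq 0$), so, as the paper notes, the bound is never attained for hyperK\"{a}hler manifolds.
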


However, this lower bound is \emph{not} sharp in the sense that the
equality cannot be attained. The reason will be clear in the process
of the following proof.

\begin{proof}
First we have \be\label{1}f(4)=\sum_{i}(-1)^i\cdot b_i\cdot i^4
=\sum_{p,q}(-1)^{p+q}\cdot
h^{p,q}\cdot(p+q)^4=2h(p^4)+8h(p^3q)+6h(p^2q^2).\ee Since the odd
Chern classes of $M$ vanish, the expression $h(p^4)$ in Lemma
\ref{technicallemma1} can be simplified to the following form
\be\label{2}h(p^4)=\frac{n(15n^3+30n^2+5n-2)}{240}c_n+\frac{1}{10}c_2c_{n-2}.\ee
The conditions that $n$ be even and $h^{p,q}=h^{p,n-q}$ can be
employed to deal with the term $h(p^3q)$: \be\begin{split}
h(p^3q)=\sum_{p,q}(-1)^{p+q}\cdot h^{p,q}\cdot
p^3q&=\sum_{p,q}(-1)^{p+n-q}\cdot h^{p,q}\cdot p^3(n-q)\\
&=\sum_{p,q}(-1)^{p+q}\cdot h^{p,q}\cdot p^3(n-q)\\
&=n\cdot h(p^3)-h(p^3q).\end{split}\nonumber\ee We thereby obtain
$h(p^3q)=\frac{n}{2}h(p^3)$. Combining this with the expression
$h(p^3)$ in Lemma \ref{technicallemma1} we have
\be\label{3}h(p^3q)=\frac{n^3(n+1)}{16}c_n.\ee
 At last we derive
an inequality for $h(p^2q^2)$, whose method is the same as that in
(\ref{inequality}). \be\label{4}h(p^2q^2)=\sum_{p,q}(-1)^{p+q}\cdot
h^{p,q}\cdot (pq)^2\leq\sum_{p,q}h^{p,q}\cdot (pq)^2\leq
\sum_{p,q}h^{p,q}\cdot (\frac{p+q}{2})^4=\frac{1}{16}\sum_ib_i\cdot
i^4.\ee Putting (\ref{1})-(\ref{4}) together and doing some
calculations we can obtain (\ref{inequality2}).

Now we explain why the equality in (\ref{inequality2}) cannot be
attained. Indeed, similar to the reason in (\ref{inequality}),
(\ref{4}) is an equality if and only if $M$ is of pure type:
$h^{p,q}=0$ whenever $p\neq q$. But this is \emph{not} compatible
with the additional symmetry $``h^{p,q}=h^{p,n-q}"$ for
hyperK\"{a}hler manifolds as, for instance, $h^{0,n}=h^{0,0}=1\neq
0$.
\end{proof}

A few more remarks are in order before we end this section. We also
know the explicit expressions for $h(p^5)$ and $h(p^6)$. Indeed
Libgober and Wood described a concrete algorithm to compute $h(p^i)$
for general $i$ (\cite[p. 144]{LW}). So in principle we can employ
these to deal with $f(6)$, $f(8)$ and so on. But when $i$ increases,
the expressions for $h(p^i)$ become more and more complicate. This
means their expressions are too complicated to formulate some
geometrically interesting consequences.

\section{Residue formulas and their applications}\label{section4}
The material in this section is inspired by the residue formulas for
the $\chi_y$-genus on almost-complex manifolds and for the
Poincar\'{e} polynomial on symplectic manifolds. We recall the
residue formulas for the $\chi_y$-genus and Poincar\'{e} polynomial
in Sections \ref{section4.1} and \ref{section4.2} respectively, and
give some related applications to symplectic geometry in Section
\ref{section4.3}.

When a smooth or an almost-complex manifold admits a
\emph{compatible} vector field or a compact Lie group action, the
philosophy of residue formula is to reduce the investigation of some
global invariants on this manifold to the consideration of the local
information around the zero point set of this vector field or the
fixed point set of this group action. Here by ``\emph{compatible}"
we mean that the one-parameter group action induced by the vector
field or the Lie group action preserves the smooth or almost-complex
structure. What we are concerned with in this section are vector
fields on almost-complex manifolds with isolated zero points and
Hamiltonian torus actions on symplectic manifolds with isolated
fixed points, which we shall discuss respectively in what follows.

\subsection{Residue formula for the $\chi_y$-genus}\label{section4.1}
The purpose of this subsection is to review a residue formula for
the $\chi_y$-genus.

Suppose $(M,g,J)$ is a compact connected almost-Hermitian manifold
with complex dimension $n$. This means $J$ is an almost-complex
structure and $g$ an almost-Hermitian metric, i.e., a Riemannian
metric which is $J$-invariant. Now suppose we have a smooth vector
field $A$ on $(M,g,J)$ preserving the metric $g$ and the
almost-complex structure $J$ such that $\text{zero}(A)$, the zero
point set of $A$, is isolated (but nonempty). Let
$P\in\text{zero}(A)$ be an arbitrary isolated zero point. Then
$T_P$, the tangent space to $(M,J)$ at $P$, is an $n$-dimensional
complex vector space equipped with an Hermitian inner product
induced by $J$. Since $A$ preserves the Hermitian metric, $A$
induces a skew-Hermitian transformation on $T_P$. This means $T_P$
can be decomposed into a sum of $n$ $1$-dimensional complex vector
spaces:
$$T_P=\bigoplus_{i=1}^nL_P(\lambda_i),
\qquad
\big(L_P(\lambda_i)\cong\mathbb{C},~\lambda_i\in\mathbb{R}-\{0\}\big),$$
such that the eigenvalue of the skew-Hermitian transformation on
$L_P(\lambda_i)$ is $\sqrt{-1}\lambda_i$. Or equivalently, the
eigenvalue of the action induced by the one-parameter group
$\textrm{exp}(tA)$ on $L_P(\lambda_i)$ is
$\textrm{exp}(\sqrt{-1}\lambda_it)$. Note that these nonzero real
numbers $\lambda_1,\ldots,\lambda_{n}$ are counted with
multiplicities and thus not necessarily mutually distinct. Of course
they depend on the choice of $P$ in $\text{zero}(A)$ but are
independent of the choice of the almost-Hermitian metric which $A$
preserves.

The following residue formula for the $\chi_y$-genus of $(M^n,g,J)$,
which is a beautiful application of the Atiyah-Bott fixed point
formula, is essentially due to Kosniowski (\cite[Theorem 1]{Ko}).

\begin{theorem}[Residue formula for the $\chi_y$-genus]
\label{localizationchiy} With the above notation and symbols
understood, we have \be\label{localizationchiy2}\chi_y(M)=
\sum_{P\in\text{zero}(A)}(-y)^{d_P}=\sum_{P\in\text{zero}(A)}(-y)^{n-d_P}\ee
where $d_P$ denotes the number of \emph{negative} numbers among
$\lambda_1,\ldots,\lambda_{n}$ and the sum is over all the points in
$\text{zero}(A)$.
\end{theorem}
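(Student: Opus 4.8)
The plan is to derive \eqref{localizationchiy2} from the Atiyah--Bott holomorphic Lefschetz fixed point theorem (in its index-theoretic / Atiyah--Singer $G$-equivariant form) applied to the one-parameter group $g_t = \exp(tA)$ acting on the Dolbeault-type operators \eqref{GDC}. First I would fix a generic $t$ so that $g := g_t$ has exactly the same fixed point set as $\operatorname{zero}(A)$ — this holds for all but countably many $t$ since the $\lambda_i$ are nonzero reals, and the isolated zeros of $A$ are precisely the isolated fixed points of $g_t$. Because $g$ preserves $(g,J)$, it acts on each $\bigoplus_{q}\Omega^{p,q}(M)$ commuting with $\bar\partial+\bar\partial^{\ast}$, so the equivariant index (the ``Lefschetz number'') $L(g,\chi^p)$ is defined; since $M$ is almost-complex, not complex, one uses the Atiyah--Singer version for the elliptic operator \eqref{GDC} rather than the classical holomorphic Lefschetz formula, but the local contributions are the same.

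The key computation is the local contribution at an isolated fixed point $P$. There the action on $T_P = \bigoplus_{i=1}^n L_P(\lambda_i)$ has eigenvalues $\mu_i := e^{\sqrt{-1}\lambda_i t}$ on the holomorphic tangent directions. The Atiyah--Bott formula gives
\[
L(g,\chi^p) \;=\; \sum_{P\in\operatorname{zero}(A)} \frac{\sigma_p(\mu_1^{-1},\dots,\mu_n^{-1})}{\prod_{i=1}^n (1-\mu_i^{-1})},
\]
where $\sigma_p$ is the $p$-th elementary symmetric polynomial, coming from the fact that the symbol of \eqref{GDC} in $\chi^p$ involves $\Lambda^p$ of the cotangent space. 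Hence, forming the generating function,
\[
\sum_{p=0}^{n} L(g,\chi^p)\, y^p \;=\; \sum_{P} \frac{\prod_{i=1}^n (1 + y\,\mu_i^{-1})}{\prod_{i=1}^n (1-\mu_i^{-1})}.
\]
Now I would rewrite the $i$-th factor: if $\lambda_i>0$ the factor is $\frac{1+y\mu_i^{-1}}{1-\mu_i^{-1}}$, while if $\lambda_i<0$ I multiply numerator and denominator by $-\mu_i$ to get $\frac{-\mu_i - y}{\mu_i - 1} = \frac{y+\mu_i}{1-\mu_i}$. The point is that the $\chi_y$-genus is a \emph{topological} invariant (computable from Chern numbers via \eqref{HRR}), so $\sum_p L(g,\chi^p)y^p = \chi_y(M)$ is independent of $t$; I would let $t\to 0$ (or take a limit along suitable $t$) so each $\mu_i \to 1$, and a careful limiting argument — grouping the $d_P$ negative eigenvalues and the $n-d_P$ positive ones — collapses the product over $P$ to $(-y)^{d_P}$. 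Concretely, in the factor $\frac{1+y\mu_i^{-1}}{1-\mu_i^{-1}}$ for a positive $\lambda_i$ one substitutes $\mu_i = e^{\sqrt{-1}\lambda_i t}$ and expands; the $(1-\mu_i^{-1})$ in the denominator has a simple zero balanced against the $x_i$ in the numerator of \eqref{HRR}, and the upshot (this is exactly Kosniowski's computation) is that each positive eigenvalue contributes a factor tending to $1$ and each negative eigenvalue a factor tending to $-y$. The second equality in \eqref{localizationchiy2} then follows either by reversing the sign of $A$ (which swaps $d_P$ and $n-d_P$) combined with the identity $\chi_y(M)=(-y)^n\chi_{y^{-1}}(M)$, or directly by the alternative grouping of the factors.

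The main obstacle is making the limit $t\to 0$ rigorous: for fixed nonzero $t$ the formula is a sum of rational functions of the $\mu_i$ with poles, and one must justify that the individual summands over $P$ have well-defined limits (not merely the total sum). The clean way around this is \emph{not} to take a limit in $t$ but to treat $y$ as the limiting parameter differently: observe that $\chi_y(M)$ is a polynomial in $y$ of degree $\le n$, that the right-hand side $\sum_P\frac{\prod(1+y\mu_i^{-1})}{\prod(1-\mu_i^{-1})}$ is \emph{a priori} a rational function of the $\mu_i$ which must in fact be a polynomial in $y$ independent of the $\mu_i$ (by the topological invariance), and then evaluate by choosing the $\mu_i$ conveniently or by extracting the polynomial part. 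Alternatively — and this is likely the cleanest exposition — one applies the known \emph{generalized} localization theorem directly in the form that already packages \eqref{GDC} for all $p$ simultaneously, so that the local contribution at $P$ is literally $\prod_{i=1}^n \frac{1+y\mu_i^{-1}}{1-\mu_i^{-1}}$, and then shows by an elementary manipulation that for each fixed $P$ this equals $(-y)^{d_P}$ \emph{as a function of $y$} once one uses that the whole sum is $t$-independent; choosing the $\lambda_i$ generically and letting the relevant $\mu_i\to 1$ one factor at a time avoids the multi-pole issue. I would cite \cite{Ko} for the original argument and present the above as a streamlined rederivation, flagging that the only subtlety is the interchange-of-limits point just described.
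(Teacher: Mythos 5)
Your overall framework --- applying the Atiyah--Bott/Atiyah--Singer Lefschetz fixed point formula to the one-parameter group $\exp(tA)$ acting on the operators (\ref{GDC}), with local contribution $\prod_{i=1}^n\frac{1+y\mu_i^{-1}}{1-\mu_i^{-1}}$ at each isolated zero $P$ --- is exactly the route the paper points to: the paper gives no proof of its own, citing Kosniowski \cite{Ko} for the complex case and \cite{Li} for the almost-complex extension via (\ref{GDC}). So the skeleton of your argument is the intended one.

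The key step, however, is carried out in the wrong limit, and the patches you offer do not close the resulting gap. As $t\to 0$ every $\mu_i=e^{\sqrt{-1}\lambda_i t}\to 1$, so each factor $\frac{1+y\mu_i^{-1}}{1-\mu_i^{-1}}$ has a pole and the individual local contributions diverge; there is no regime near $t=0$ in which ``each positive eigenvalue contributes a factor tending to $1$ and each negative eigenvalue a factor tending to $-y$.'' Your fallback --- that $\sum_p L(g_t,\chi^p)\,y^p$ is independent of $t$ ``by topological invariance'' --- is circular: the non-equivariant index $\chi^p(M)$ is indeed determined by Chern numbers, but the equivariant index $L(g_t,\chi^p)$ is a priori a nonconstant character of the circle, and its constancy (rigidity) is precisely the nontrivial content to be established. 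The correct argument (Lusztig, Kosniowski, Atiyah--Hirzebruch, and \cite{Li}) runs in the opposite direction: $L(g_t,\chi^p)$ is a finite Laurent polynomial in $q=e^{\sqrt{-1}t}$; extend $t$ to complex values and let $\mathrm{Im}(t)\to\mp\infty$, so that $|\mu_i|\to 0$ or $\infty$ according to the sign of $\lambda_i$. In each of these two regimes every local factor converges, to $1$ or to $-y$, so the fixed point formula shows the Laurent polynomial is bounded as $q\to 0$ and as $q\to\infty$, hence constant; the constant equals $\chi^p(M)$ at $t=0$ and equals the two limits $\sum_P(-y)^{d_P}$ and $\sum_P(-y)^{n-d_P}$ at the two ends, giving both equalities in (\ref{localizationchiy2}) at once. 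Your proposal never states this step, and the substitutes you sketch (``extracting the polynomial part,'' ``letting the relevant $\mu_i\to 1$ one factor at a time'') do not supply it.
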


\begin{remark}
~ \begin{enumerate} \item For complex manifolds this result was
discovered by Kosniowski in \cite[Theorem 1]{Ko}, whose proof is an
application of the Atiyah-Bott fixed point formula to the Dolbeault
complex (\ref{DC}) originating from \cite{Lu}. Indeed, if we replace
the Dolbeault complex (\ref{DC}) by the two-step elliptic complex
(\ref{GDC}), this result still holds for almost-complex manifolds,
which has been carried out by the author in \cite{Li} and used to
give some related applications to symplectic geometry. In \cite{Li2}
this idea was further extended .

\item
In the theorem above, the condition that $A$ preserve some
almost-Hermitian metric on $(M,J)$ can be relaxed to assume only
that the endomorphism induced by $A$ on $T_P$ is nonsingular for any
$P\in\text{zero}(A)$, which is what \cite{Ko} treated and the
condition of which is called \emph{``isolated simple zero points"}
in \cite{Ko} . However, if the zero point set $\text{zero}(A)$ is
not necessarily isolated, there is a similar residue formula for
$\chi_y(M)$ which has also been treated in \cite[Theorem 3]{Ko}. But
in this case we need to use the general Lefschetz fixed point
formula of Atiyah-Singer and it needs the additional condition that
$A$ be \emph{compact}. This means the one-parameter group of $A$
lies in a compact group, which is equivalent to the condition that
$A$ preserve an almost-Hermitian metric on $(M,J)$ as we have
required in Theorem \ref{localizationchiy}.
\end{enumerate}
\end{remark}

\subsection{Morse identity for Hamiltonian torus actions}\label{section4.2}
We now turn to the discussion on the Poincar\'{e} polynomial. Unlike
the case of the $\chi_y$-genus in Theorem \ref{localizationchiy},
even if a compact almost-complex manifold $(M,J)$ admits a
compatible vector field $A$, in \emph{general} there is no residue
formula expressing the Betti numbers of $(M,J)$ in terms of the
local information around $\text{zero}(A)$. The reason for the
existence of Theorem \ref{localizationchiy} is that the coefficients
$\chi^p$ in $\chi_y(M)$ are indices of some natural elliptic
operators (\ref{GDC}) and so we can apply the Lefschetz-type fixed
point formula of elliptic complexes developed by Atiyah, Bott and
Singer to the vector field $A$ to obtain Theorem
\ref{localizationchiy}. So the lack of an analogue to Theorem
\ref{localizationchiy} for the Poincar\'{e} polynomial lies in the
fact that in general the Betti numbers can \emph{not} be realized as
indices of some natural elliptic operators. Nevertheless, if we
impose more structures on $(M,J)$ and $A$, we can reduce the
calculations of the Betti numbers of $(M,J)$ to those of
$\text{zero}(A)$ via the Morse equality for \emph{perfect Morse
functions}.

Suppose in this subsection that $(M,\omega)$ is a compact connected
symplectic manifold of real dimension $2n$ and with a symplectic
form $\omega$. All relevant facts mentioned in what follows can be
found in three excellent books: \cite[Ch. 4]{Au}, \cite[\S 5.5]{MS}
or \cite[\S 3.6]{Ni}. We call a torus action ($T$-action) on
$(M^{n},\omega)$ \emph{symplectic} if this action preserves the
symplectic form $\omega$. We choose a generating vector field $A$
for this $T$-action, i.e., the one-parameter group of $A$ is dense
in this torus $T$. Note that in this case the $T$-action on
$(M^{n},\omega)$ is symplectic if and only if the one form
$i_A(\omega):=\omega(A,\cdot)$ is closed, where $i_A(\cdot)$ denotes
the contraction operator with respect to $A$. Indeed, we know from
the definition of $A$ that the $T$-action on $(M^{n},\omega)$ is
symplectic if and only if the Lie derivative $L_A(\omega)=0$. The
Cartan formula $L_A=\text{d}\circ i_A+i_A\circ\text{d}$ and the
closedness of $\omega$ tell us that $L_A(\omega)=0$ is equivalent to
$\text{d}\big(i_A(\omega)\big)=0$. We call this $T$-action
\emph{Hamiltonian} if the one form $i_A(\omega)$ is exact. This
means there exists a function $f$ on $M$, which is called the
\emph{moment map} of this $T$-action and is unique up to an additive
constant, such that $i_A(\omega)=\text{d}f.$ It is well-known that
this $f$ is a \emph{perfect Morse-Bott function} and
$\text{Crit}(f)$, the critical point set of $f$, coincides with
$\text{zero}(A)$. The latter also coincides with the fixed point set
of the $T$-action.

Note also that we can choose an almost-complex structure $J$ on $M$
such that it is both compatible with $\omega$ and preserved by this
$T$-action (\cite[Lemma 5.52]{MS}). The compatibility between
$\omega$ and $J$ tells us that the bilinear form
$g(v,w):=\omega(v,Jw)$ is an almost-Hermitian metric on $M$. The
facts that $T$-action preserve $\omega$ and $J$ imply that this
$T$-action preserves the metric $g$:
$$\forall t\in T,~t^{\ast}(g)(v,w)=g(t_{\ast}v,t_{\ast}w)=
\omega(t_{\ast}v,Jt_{\ast}w)=\omega(t_{\ast}v,t_{\ast}Jw)=
\omega(v,Jw)=g(v,w).$$ This means the vector field $A$  preserves
both the almost-complex structure $J$ and the almost-Hermitian
metric $g$. Now we assume further that the fixed points of this
$T$-action, which coincides with $\text{zero}(A)=\text{Crit}(f)$,
are all isolated. In this case the above-mentioned function $f$
degenerates to a perfect Morse function and, at each isolated point
$P\in\text{zero}(A)=\text{Crit}(f)$, the Morse index of $f$ is
$2d_P$, twice the number of negative numbers among
$\lambda_1,\ldots,\lambda_n$. Here we use the notation and symbols
introduced in the last subsection. Then the Morse-type equality for
this perfect Morse function $f$ yields the following result.

\begin{theorem}[Residue formula for the Poincar\'{e} polynomial]
\label{localizationpoincare} Suppose a compact connected symplectic
manifold $(M^n,\omega)$ admits a Hamiltonian torus action with
isolated fixed point set. With the above-discussed notions and
symbols understood, the Morse-type equality for the perfect Morse
function $f$ gives us \be\label{localizationpoincare2}
P_y(M)=\sum_{P\in\text{zero}(A)}y^{2d_P}=
\sum_{P\in\text{zero}(A)}y^{2(n-d_P)},\ee
 where the sum is over all
the isolated points in $\text{Crit}(f)=\text{zero}(A)$. Moreover,
(\ref{localizationpoincare2}) still holds if we replace $P_y(M)$
with the Poincar\'{e} polynomial with respect to any coefficient
field $K$, $P_y(M;K)$, where
$$P_y(M;K):=\sum_{i}b_i(M;K)y^i$$
 and $b_i(M;K)$ is
the $i$-th Betti number with respect to the field $K$. Consequently,
the integral homology $H_{\ast}(M,\mathbb{Z})$ of $M$  has no
odd-dimensional homology and is torsion-free.
\end{theorem}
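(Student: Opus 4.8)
The plan is to run Morse theory for the perfect Morse function $f$ and then extract the integral and field-coefficient statements from the single observation that all of its Morse indices are \emph{even}. The first equality in (\ref{localizationpoincare2}) is immediate from what has already been arranged: $f$ is a perfect Morse function and at each isolated $P\in\text{Crit}(f)$ its Morse index equals $2d_P$, so the defining property of a perfect Morse function (the Morse inequalities hold as equalities) gives $P_y(M)=\sum_{P}y^{2d_P}$. For the second equality one can argue in either of two equivalent ways. Directly: the Poincar\'{e} dualities $b_i=b_{2n-i}$ turn $P_y(M)=y^{2n}P_{y^{-1}}(M)$ into $\sum_{P}y^{2n-2d_P}$. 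Via the vector field: $-A$ has one-parameter group $\exp(-tA)=\exp(tA)^{-1}$, whose image is again dense in $T$, so $-A$ generates the same $T$-action, with moment map $-f$ and with rotation numbers $-\lambda_1,\ldots,-\lambda_n$ at each $P$; since the $\lambda_i$ are all nonzero (the fixed points being isolated, hence $f$ nondegenerate), the number of negative ones is $n-d_P$, and the first equality applied to $-A$ gives $P_y(M)=\sum_{P}y^{2(n-d_P)}$.

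The key structural point is that every Morse index $2d_P$ is even. Hence the handle decomposition of $M$ induced by $f$ has handles only in even indices, so the associated cellular (equivalently Morse) chain complex $C_\ast=C_\ast(M;\mathbb{Z})$ is concentrated in even degrees; for each $k$ either $C_k=0$ or $C_{k-1}=0$, so every boundary map $\partial_k\colon C_k\to C_{k-1}$ vanishes. Therefore $H_\ast(M;\mathbb{Z})\cong C_\ast$ as graded abelian groups: it is free and supported in even degrees. In particular $H_{\text{odd}}(M;\mathbb{Z})=0$ and $H_\ast(M;\mathbb{Z})$ is torsion-free, which is the last assertion of the theorem.

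The coefficient-field statement follows from the same vanishing. Over an arbitrary field $K$ the complex $C_\ast\otimes_{\mathbb{Z}}K$ again has zero differential, so $b_i(M;K)=\dim_K(C_i\otimes_{\mathbb{Z}}K)=\operatorname{rank}_{\mathbb{Z}}C_i=\#\{P\in\text{zero}(A):2d_P=i\}$, independent of $K$; equivalently, since $H_\ast(M;\mathbb{Z})$ is free, the universal coefficient theorem kills all $\operatorname{Tor}$-terms and yields $H_i(M;K)\cong H_i(M;\mathbb{Z})\otimes_{\mathbb{Z}}K$. Either way $P_y(M;K)=\sum_{P}y^{2d_P}=\sum_{P}y^{2(n-d_P)}$, so (\ref{localizationpoincare2}) holds verbatim with $\mathbb{Z}$ replaced by $K$.

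The argument is soft once perfectness of $f$ and the index identity $\text{ind}_P(f)=2d_P$ are in hand, both of which are recorded in the preceding discussion; the only point needing care is the passage from ``perfect over $\mathbb{Q}$'' to the integral and arbitrary-field conclusions, and that is exactly where the even-index observation does the work — it forces the integral Morse/CW differential to vanish. I would therefore foreground the even-index remark as the crux and obtain the second equality, the field-independence, and the torsion-freeness as formal consequences of it together with the first equality.
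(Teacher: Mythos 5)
Your proof is correct and follows essentially the same route as the paper, which simply invokes the Morse-type equality for the perfect Morse function $f$ (the moment map) with all critical points of even index $2d_P$; you merely make explicit the lacunary-principle step (all even indices force the Morse/cellular differentials to vanish) that the paper leaves implicit, which is what delivers the field-independence and the torsion-freeness. Both of your derivations of the second equality (Poincar\'{e} duality, or replacing $A$ by $-A$) are valid.
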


\begin{remark}
When the almost-complex structure $J$ is integral, i.e., $M$ is
K\"{a}hler, Theorem \ref{localizationpoincare} is a classical result
due to Frankel in \cite[\S 4, Corollary 2]{Fr}.
\end{remark}

\subsection{Applications}\label{section4.3}
In this subsection we give an application via the two residue
formulae discussed above.

Our application here is concerned with various geometric and
topological obstructions to the existence of Hamiltonian torus
actions on compact connected symplectic manifold with isolated fixed
points, and is motivated by a famous conjecture in symplectic
geometry, which was raised by McDuff in her seminal paper \cite{Mc}
and now is commonly called the \emph{McDuff conjecture} or
\emph{Frankel-Mcduff conjecture} as \cite{Mc} is inspired by
Frankel's another seminal paper \cite{Fr}. Suppose we have a compact
connected symplectic manifold equipped with a symplectic torus
action. In symplectic geometry it is an important topic to detect
whether or not this given symplectic torus action is Hamiltonian
(\cite[Ch 5]{MS}). The famous McDuff conjecture, which is still open
in its generality, says that any symplectic circle action with
\emph{isolated} fixed points must be Hamiltonian (\cite{Mc}). Many
partial results towards this conjecture have been obtained over the
past two decades (see \cite[Introduction]{Li} and the references
therein). In \cite{Li}, the author applies the rigidity property of
the elliptic operators (\ref{GDC}) to give a criterion to detect if
a given symplectic circle action with isolated fixed points is
Hamiltonian. By using this criterion we can both recover all the
previously known results towards this conjecture and simplify their
proofs.

Roughly speaking, our next application, Theorem \ref{obstruction},
attempts to explain that the existence of compact symplectic
manifolds equipped with Hamiltonian torus actions with isolated
fixed points is a very ``rare" phenomenon via finding out as many
geometric and topological obstructions as possible imposed on these
symplectic manifolds. The main strategy of this application is to
employ the two residue formulas (\ref{localizationchiy2}) and
(\ref{localizationpoincare2}) simultaneously.

\begin{theorem}\label{obstruction}
{\rm{If a compact connected symplectic manifold $(M^{2n},\omega)$
admits a Hamiltonian torus action with isolated fixed points, then

\begin{enumerate}
\item
The integral homology $H_{\ast}(M,\mathbb{Z})$ of $M$ is
torsion-free and has no odd-dimensional homology.

\item
$$\chi_{-y^2}(M)=P_y(M).$$
This means that the $\chi_y$-genus and the Poincar\'{e} polynomial
are essentially the same.

\item
The signature of $M$ is equal to
$$\sum_{i=0}^{n}(-1)^ib_{2i}(M),$$
the alternating sum of its even-dimensional Betti numbers.

\item
The characteristic numbers
$$c_n,\qquad c_1c_{n-1},\qquad (c_{1}^{2}+3c_{2})c_{n-2}-
(c_{1}^{3}-3c_{1}c_{2}+3c_{3})c_{n-3},\qquad\cdots$$ of $M$ can be
completely determined by Betti numbers of $M$ in a very explicit
manner:
$$c_n=\sum_ib_{2i},\qquad c_1c_{n-1}=6\sum_ii(i-1)b_{2i}-
\frac{n(3n-5)}{2}\sum_ib_{2i},\qquad\cdots.$$

\end{enumerate}}}
\end{theorem}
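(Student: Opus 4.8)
The plan is to combine the two residue formulas, Theorems \ref{localizationchiy} and \ref{localizationpoincare}, applied to the \emph{same} compatible vector field $A$ generating the Hamiltonian torus action. Part (1) is immediate: the existence of a Hamiltonian torus action with isolated fixed points makes the moment map $f$ a perfect Morse function with only even-index critical points, so $H_\ast(M,\mathbb Z)$ is torsion-free with no odd-dimensional homology — this is exactly the last assertion of Theorem \ref{localizationpoincare}, and I would simply quote it. For part (2), the key observation is that both residue formulas are sums over the \emph{same} set $\text{zero}(A)=\text{Crit}(f)$ with the \emph{same} local invariants $d_P$. Indeed, by (\ref{localizationpoincare2}) we have $P_y(M)=\sum_{P}y^{2d_P}$, while by (\ref{localizationchiy2}) we have $\chi_y(M)=\sum_P(-y)^{d_P}$. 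Substituting $-y^2$ for $y$ in the latter gives $\chi_{-y^2}(M)=\sum_P(-(-y^2))^{d_P}=\sum_P(y^2)^{d_P}=\sum_P y^{2d_P}=P_y(M)$. So part (2) is a one-line bookkeeping argument once both formulas are on the table.

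Part (3) follows by specializing the identity in (2). Setting $y=1$ in $\chi_{-y^2}(M)=P_y(M)$ gives $\chi_{-1}(M)=P_1(M)$; but wait — I actually want the \emph{signature}, which is $\chi_y(M)\big|_{y=1}=\chi_1(M)$, as recalled in the introduction. So instead I set $y=\sqrt{-1}$ (or more cleanly, I compare coefficients): writing $P_y(M)=\sum_i b_i(M)y^i=\sum_i b_{2i}(M)y^{2i}$ (using part (1), all odd Betti numbers vanish), the identity $\chi_{-y^2}(M)=\sum_i b_{2i}(M)y^{2i}$ becomes, after the substitution $t=-y^2$, the relation $\chi_t(M)=\sum_i b_{2i}(M)(-t)^i$. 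Evaluating at $t=1$ yields the signature $=\sum_i(-1)^i b_{2i}(M)$, which is (3). (This also recovers the Todd genus at $t=0$ and Euler characteristic at $t=-1$ as sanity checks.)

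Part (4) is the computational heart. From part (3)'s refined identity $\chi^p(M)=(-1)^p b_{2p}(M)$ — equivalently $\chi_y(M)=\sum_p b_{2p}(M)(-y)^p$ — I feed this into the ``$-1$"-phenomenon machinery of Section \ref{section2}. Recall from (\ref{aihodege}) that the Taylor coefficients $a_i$ of $\chi_y(M)$ at $y=-1$ satisfy $a_i=\frac{(-1)^i}{i!}\sum_p(-1)^p\chi^p\cdot p(p-1)\cdots(p-i+1)$, and that each $a_i$ (equivalently each $h(p^i)$ via Lemma \ref{technicallemma1}) is an explicit polynomial in the Chern numbers. Substituting $(-1)^p\chi^p=b_{2p}(M)$ turns each such Chern-number expression into an explicit polynomial in the even Betti numbers $b_{2i}(M)$. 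Concretely: from $a_0=c_n$ and $a_0=\sum_p(-1)^p\chi^p=\sum_p b_{2p}$ we get $c_n=\sum_i b_{2i}$; from the displayed formulas for $a_1,a_2$ one solves for $c_1c_{n-1}$ in terms of $\sum_p p(p-1)b_{2p}$ and $\sum_p b_{2p}$, giving the stated $c_1c_{n-1}=6\sum_i i(i-1)b_{2i}-\frac{n(3n-5)}{2}\sum_i b_{2i}$; and the next relation involving $(c_1^2+3c_2)c_{n-2}-(c_1^3-3c_1c_2+3c_3)c_{n-3}$ comes identically from the $a_4$ formula (after eliminating the $c_n$ and $c_1c_{n-1}$ contributions using the earlier relations). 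The main obstacle here is purely organizational rather than conceptual: one must be careful that the linear combinations $p(p-1)\cdots(p-i+1)$ appearing in $a_i$, when re-expressed against the $b_{2p}$, produce the claimed closed forms — this is the elementary-linear-algebra content of Lemma \ref{technicallemma1}(1), so I would invoke that lemma and just record the first few resulting identities, noting that the general pattern continues with expressions of increasing complexity exactly as remarked at the end of Section \ref{section3}.
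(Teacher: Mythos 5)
Your proposal is correct and follows essentially the same route as the paper: both residue formulas are applied to the same vector field $A$ over the same fixed point set with the same local indices $d_P$, giving $(2)$ by the substitution $y\mapsto -y^2$, with $(1)$ quoted from Theorem \ref{localizationpoincare} and $(3)$, $(4)$ deduced from $(2)$ together with the ``$-1$''-phenomenon. Your extra step of extracting the coefficient identity $\chi^p=(-1)^p b_{2p}$ merely makes explicit what the paper leaves implicit, and your self-correction in part $(3)$ lands on the right evaluation.
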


\begin{proof}
$(1)$ has been mentioned in Theorem \ref{localizationpoincare}. (2)
comes from the two residue formulas (\ref{localizationchiy2}) and
(\ref{localizationpoincare2}). Indeed under our condition
(\ref{localizationchiy2}) and (\ref{localizationpoincare2}) read
$$\chi_y(M)=\sum_{P\in~\textrm{zero}(A)}(-y)^{d_P}\qquad
\textrm{and} \qquad P_y(M)=\sum_{P\in~\textrm{Crit}(f)}y^{2d_P},$$
which yield $(2)$. $(3)$ is a corollary of $(2)$ as
$\chi_y(M)\big|_{y=1}$ is nothing but the signature of $M$. $(4)$ is
a corollary of $(2)$ and the $``-1"$-phenomenon described in Section
\ref{section2.1}.
\end{proof}

\begin{remark}
~\begin{enumerate} \item
 In the theorem above, property $(1)$ should be quite
well-known to experts. But to the author's best knowledge, nobody
states it as explicitly as ours in the previous literature for
compact connected symplectic manifolds with Hamiltonian torus
actions.

\item
In the case of \emph{circle} actions, property $(3)$ has been
obtained by Jones-Rawnsley (\cite{JR}) via the Atiyah-Bott fixed
point formula. Recall that the signature is by definition the index
of the intersection pairing on the middle dimensional cohomology of
$M$ and thus \emph{a priori} depends on the \emph{ring} structure of
$H^{\ast}(M;\mathbb{R})$. However, property $(3)$ reveals an
interesting phenomenon for $M$: its signature depends only on the
\emph{additive} structure of $H^{\ast}(M;\mathbb{R})$.
\end{enumerate}
\end{remark}

\bibliographystyle{amsalpha}

\end{document}